\newcommand{\rset}{\mathbb{R}}
\newcommand{\bo}{\mathbf}
\newtheorem{lema}{Lemma}
\newtheorem{theorem}{Theorem}
\newproof{proof}{Proof}
\newcommand{\neigh}{\mathcal{N}^{i}}
\newcommand{\Jvec}{J^{\neigh}}
\begin{document}
\begin{frontmatter}

\title{Efficient parallel coordinate descent algorithm for convex
optimization problems with separable constraints: application to
distributed MPC\tnoteref{acknow}}

\tnotetext[acknow]{ The research leading to these results has
received funding from: the European Union, Seventh Framework
Programme (FP7/2007--2013) under grant agreement no 248940;
CNCSIS-UEFISCSU (project TE, no. 19/11.08.2010); ANCS (project PN
II, no. 80EU/2010); Sectoral Operational Programme Human Resources
Development 2007-2013 of the Romanian Ministry of Labor, Family and
Social Protection through the Financial Agreement
POSDRU/89/1.5/S/62557.}

\author[First]{Ion Necoara}
\author[First]{Dragos Clipici}

\address[First]{University Politehnica Bucharest, Automatic Control and Systems Engineering
Department, 060042 Bucharest, Romania \\ (e-mail:
ion.necoara@acse.pub.ro, d.clipici@acse.pub.ro)}

\begin{abstract}
In this paper we propose a parallel coordinate descent algorithm for
solving smooth convex optimization problems with separable
constraints that may arise e.g. in distributed model predictive
control (MPC)  for linear network systems. Our algorithm is based on
block coordinate descent updates in parallel and has a very simple
iteration. We prove (sub)linear rate of convergence for the new
algorithm under standard assumptions for smooth convex optimization.
Further,  our algorithm uses  local information and thus is suitable
for distributed implementations. Moreover,  it has low iteration
complexity, which makes it appropriate for embedded control. An MPC
scheme based on this new parallel algorithm is derived, for which
every subsystem in the network can compute feasible and stabilizing
control inputs using distributed and cheap computations. For
ensuring stability of the MPC scheme, we use a terminal cost
formulation derived from a distributed synthesis. Preliminary
numerical tests show better performance for our optimization
algorithm  than other existing methods.
\end{abstract}

\begin{keyword}
Coordinate descent optimization, parallel algorithm, (sub)linear
convergence rate, distributed model predictive control, embedded
control.
\end{keyword}

\end{frontmatter}

\section{Introduction}
\label{secintr}

Model predictive control (MPC) has become a popular advanced control
technology implemented in network systems due to its ability to
handle hard input and state constraints \cite{RawMay:09}. Network
systems are usually modeled  by a graph  whose nodes represent
subsystems and whose arcs indicate dynamic couplings. These types of
systems are complex and large in dimension, whose structures may be
hierarchical and they have multiple decision-makers (e.g. process
control \cite{Sca:09}, traffic and power systems
\cite{Godbole,Venkat}, flight formation~\cite{Massioni}).

Decomposition methods represent a very powerful tool for solving
distributed MPC  problems in network systems. The basic idea of
these methods is to decompose the original large optimization
problem into smaller subproblems. Decomposition methods can be
divided in two main classes: primal  and dual decomposition methods.
In primal decomposition the optimization problem is solved using the
original formulation and variables  via methods such as
interior-point, feasible directions, Gauss-Jacobi type and others
\cite{CamSch:11,Dunbar,FarSca:12,Stewart,Venkat}. In dual
decomposition the original problem is rewritten using Lagrangian
relaxation for the coupling constraints and the dual problem is
solved with a Newton or (sub)gradient algorithm
\cite{AlvLim:11,BerTsi:89,DoaKev:11,Nec:08,NecNed:11}. In
\cite{Stewart,Venkat} cooperative based distributed MPC algorithms
are proposed based on Gauss-Jacobi iterations, where asymptotic
convergence to the centralized solution  and  feasibility for their
iterates is proved. In \cite{Dunbar,FarSca:12} non-cooperative
algorithms are derived for distributed MPC problems, where
communication takes place only between neighbors. In
\cite{CamSch:11} a distributed algorithm based on interior-point
methods  is proposed whose iterates converge to the centralized
solution. In \cite{AlvLim:11,DoaKev:11,Nec:08,NecNed:11} dual
distributed gradient algorithms based on Lagrange relaxation of the
coupling constraints are presented for solving MPC problems,
algorithms which usually produce feasible and optimal primal
solutions in the limit. While much research has focused on a  dual
approach, our work develops  a primal method that ensures constraint
feasibility, has low iteration complexity and provides estimates on
suboptimality.

Further, MPC schemes tend to be quite costly computation-wise
compared with classical control methods, e.g.  PID controllers,  so
that for these advanced schemes we need  hardware with a reasonable
amount of computational power that is  embedded on the subsystems.
Therefore, research for distributed and embedded MPC has gained
momentum in the past few years. The concept behind embedded MPC is
designing a control scheme that can be implemented on autonomous
electronic hardware, e.g  programmable logic controllers (PLC)
\cite{ValRos:11} or  field-programmable gate arrays (FPGAs)
\cite{KerriganFPGA}. Such devices vary widely in both computational
power and memory storage capabilities as well as cost. As a result,
there has been a growing focus on making MPC schemes faster by
reducing problem size  and improving the computational efficiency
through decentralization \cite{Sca:09},  moving block strategies
(e.g. by using latent variables \cite{GolMac:10} or Laguerre
functions \cite{Wan:04}) and other procedures, allowing these
schemes to be implemented on cheaper hardware with little computational power.

The main contribution of this paper is the development of a parallel
coordinate descent algorithm for smooth convex optimization problems
with separable constraints that is computationally efficient and
thus suitable for  MPC schemes that need to be implemented
distributively or  in hardware with limited computational power.
This algorithm employs parallel block-coordinate updates for the
optimization variables and has similarities to the optimization
algorithm proposed in \cite{Stewart}, but with simpler
implementation, lower iteration complexity  and guaranteed rate of
convergence. We derive (sub)linear rate of convergence for the new
algorithm whose proof relies on the Lipschitz property of the
gradient of the objective function.  The new parallel algorithm is
used for solving MPC problems for general linear  network systems in
a distributed fashion using  local information. For ensuring
stability of the MPC scheme, we use a terminal cost formulation
derived from a distributed synthesis and we eliminate the need for a
terminal state constraint. Compared with the existing approaches
based on an end point constraint, we reduce the conservatism by
combining the underlying structure of the system with distributed
optimization \cite{HuLin:02,LimAla:03,PriNev:00}. Because the MPC
optimization problem is usually terminated before convergence, our
MPC controller is a form of suboptimal control. However, using the
theory of suboptimal control \cite{ScoMay:99} we can still guarantee
feasibility and stability.

This paper is organized as follows.
In Section \ref{secrcd} we derive our
parallel coordinate descent optimization algorithm and prove the
convergence rate for it.
 In Sections
\ref{secpf}-\ref{sec_stability} we introduce the model for general
network  systems, present the MPC problem with a terminal cost
formulation and provide the means for which this terminal cost can
be synthesized distributively.  In Sections \ref{mpc_stab}-\ref{mpc_distrib} we employ
our algorithm for distributively solving MPC problems arising from network systems
and discuss details regarding its implementation.
 In Section \ref{numresults} we compare its
performance with other algorithms and test it on a real application
- a quadruple water tank process.



\section{A parallel  coordinate descent algorithm for smooth convex problems
with separable constraints} \label{secrcd}
We work in  $\rset^n$ composed by column vectors. For $u,v \in
\rset^n$ we denote the standard Euclidean inner product $\langle u,v
\rangle= u^T v$, the Euclidean norm $\left \| u \right
\|=\sqrt{\langle u,u \rangle}$ and $\left \| x \right \|^2_P = x^T P
x $. Further, for a symmetric matrix $P$, we use $P \succ 0 \ (P
\succeq 0)$ for a positive (semi)definite matrix. For  matrices $P$
and $Q$, we use $\text{diag}(P, Q)$ to denote the block diagonal
matrix formed by these two matrices.

In this section we
propose a parallel coordinate descent based algorithm for
efficiently solving the general convex optimization problem of the following
form:
\begin{align}
\label{prob_princ}  f^*=\min_{\mathbf{u}^1 \in \bo{U}^{1},\cdots
,\mathbf{u}^M \in \bo{U}^{M}} f(\mathbf{u}^1,\dots ,\mathbf{u}^M),
\end{align} where $\bo{u}^i \in \rset^{n_\bo{u}^i}$ with $i=1,\dots,M$, are the decision
variables, constrained to individual convex sets $\bo{U}^i \subset \rset^{n_\bo{u}^i}$.
We gather the individual constraint sets $\bo{U}^i$ into the set $\bo{U}=\bo{U}^1\times \dots \times \bo{U}^M$,
and denote the entire decision variable for \eqref{prob_princ} by $\bo{u}=\left [(\mathbf{u}^1)^T \dots (\mathbf{u}^M)^T \right ]^T \in \rset^{n_\bo{u}}$,
with $n_\bo{u}=\sum_{i=1}^M n_\bo{u}^i$.
 As we will show in this section, the new algorithm  can be used  on many parallel
computing architectures, has low computational cost per iteration
and guaranteed convergence rate. We will then apply this algorithm
for solving distributed MPC problems arising in network systems in
Section \ref{pcdmmpc}.


\subsection{Parallel Block-Coordinate Descent Method}
Let us partition the identity matrix in accordance with the structure of the decision
variable $\bo{u}$:
\begin{equation*}
I_{n_\bo{u}}=\left [ (E^1)^T \dots (E^M)^T \right]^T \in \rset^{n_\bo{u} \times
 n_\bo{u}} \text{, }
\end{equation*}
where $E^i \in \rset^{n_\bo{u} \times n_\bo{u}^i}$ for all  $i=1, \cdots, M$.
With matrices $E^i$  we can represent
$\bo{u}=\sum^{M}_{i=1} E^i \bo{u}^i$. We also define the partial
gradient $\nabla_i f(\bo{u}) \in \rset^{n_\bo{u}^i}$ of $f(\bo{u})$ as:
$\nabla_i f(\bo{u})= (E^i)^T \nabla f(\bo{u})$. We assume that the
gradient of $f$ is coordinate-wise Lipschitz continuous with
constants $L_i >0$, i.e:
\begin{equation}
\label{nablai} \left \| \nabla_i f (\bo{u}+{E}^i h_i) - \nabla_i
f(\bo{u}) \right \| \leq L_i \left \| h_i \right \| \;\;\; \forall
\bo{u} \in \rset^{n_\bo{u}}, \; h_i \in \rset^{n_\bo{u}^i}.
\end{equation}
Due to the assumption that $f$ is coordinate-wise Lipschitz continuous,
it can be easily deduced that \cite{Nes:12}:
\begin{equation}\label{ec7}
f(\bo{u}+E^i h_i) \leq f(\bo{u})+ \left <\nabla_i f (\bo{u}), h_i
\right >+\frac{L_i}{2} \left \| h_i \right \|^2 \;\;\; \forall \bo{u} \in
 \rset^{n_\bo{u}}, \;  h_i \in \rset^{n_\bo{u}^i}.
\end{equation}

We now introduce the following norm for the extended space
$\mathbb{R}^{n_\bo{u}}$:
\begin{equation}\label{ec12}
\left\| \bo{u} \right \|_1^2=\sum^{M}_{i=1}L_i \left \| \bo{u}^i \right \|^2,
\end{equation}
which will prove useful for estimating the rate of convergence for
our algorithm.  Additionally, if function $f$ is smooth and
strongly convex with regards to $\left \| \cdot \right \|_1$
 with a parameter $\sigma_1$, then \cite{Nes:04}:
\begin{equation}\label{strong_conv}
f(\bo{w}) \geq f(\bo{v}) + \left < \nabla f(\bo{v}),
\bo{w}-\bo{v}\right > + \frac{\sigma_1}{2} \left \|\bo{w}-\bo{v}
\right \|_1^2 \; \forall \bo{w}, \bo{v} \in \rset^{ n_\bo{u}}.
\end{equation}
Note that if $f$ is strongly convex w.r.t the standard Euclidean
norm $\left \| \cdot \right \|$ with a parameter $\sigma_0$, then
$\sigma_0 \geq \sigma_1  L^i_{\max}$, where
$L^i_{\max}=\displaystyle \max_i L_i$. By taking $\bo{w}=\bo{v}+ E^i
h_i$ and $\bo{v}=\bo{u}$ in \eqref{strong_conv} we also get:
\begin{align*}
f(\bo{u}+E^i h_i) \geq f(\bo{u}) + \left < \nabla_i f(\bo{u}),  h_i
\right > +\frac{\sigma_1 L_i}{2} \left \| h_i \right \|^2 \;\;
\forall \bo{u} \in
 \rset^{ n_\bo{u}}, h_i \in \rset^{n_\bo{u}^i},
\end{align*}
and combining with \eqref{ec7} we also deduce that $\sigma_1 \leq 1$.

We now define the constrained coordinate update for our algorithm:
\begin{align*}
\bo{\bar{v}}^i(\bo{u}) & = \arg \min _{\bo{v}^i \in \bo{U}^{i}}  \left <
\nabla_i f(\bo{u}), \bo{v}^i-\bo{u}^i\right > +\frac{L_i}{2} \left
\| \bo{v}^i - \bo{u}^i \right \|^2 \nonumber \\
\bar{\bo{u}}^i(\bo{u})& = \bo{u}+E^i(\bo{v}^i(\bo{u})-\bo{u}^i), \;
 i=1, \dots, M. \nonumber
\end{align*}

The optimality conditions for the previous optimization problem are:
\begin{align}
 \label{opcond}
& \left< \nabla_i f (\bo{u})+  L_i (\bo{\bar{v}}^i (\bo{u})  -\bo{u}^i),
\bo{v}^i -\bo{\bar{v}}^i(\bo{u})\right >  \geq 0 \;\; \forall \bo{v}^i \in
\bo{U}^{i}.
\end{align}
Taking $\bo{v}^i=\bo{u}^i$ in the previous inequality and combining
with \eqref{ec7} we obtain the following decrease in the objective
function: \setlength{\arraycolsep}{0.0em}
\begin{equation}\label{ec11}
f(\bo{u})-f(\bar{\bo{u}}^i(\bo{u})) \geq \frac{L_i}{2} \left \|
\bo{\bar{v}}^i (\bo{u})-\bo{u}^i \right \|^2.
\end{equation}

We now present our \textit{Parallel Coordinate Descent Method}, that
resembles the method in \cite{Stewart} but with  simpler
implementation, lower iteration complexity and guaranteed rate of
convergence, and is a parallel version of the coordinate descent
method from \cite{Nes:12}:
\begin{center}
\framebox[1.4\width]{
\parbox{8cm}{
\begin{center}
\textbf{ Algorithm PCDM }
\end{center}
\textbf{Choose} $\bo{u}_0^i \in \bo{U}^i$ \textbf{for all}
$i=1,\dots, M$. \textbf{For} $k\geq 0$:
\begin{enumerate}
\item{\textbf{Compute in parallel $\bo{\bar{v}}^i(\bo{u}_k), \; i=1,\dots,M.$
}}
\item{\textbf{Update in parallel:}
\begin{equation*}
 \bo{u}_{k+1}^i = \frac{1}{M} \bo{\bar{v}}^i(\bo{u}_k)+\frac{M-1}{M}\bo{u}_k^i, \; i=1,\dots,M.
\end{equation*}
}
\end{enumerate}
}}
\end{center}

Note that if the sets $ \bo{U}^{i}$ are simple (by simple we
understand that the projection on these sets is easy), then
computing $\bo{\bar{v}}^i(\bo{u})$ consists of projecting a vector
on these sets and can be done numerically very efficient.  For
example, if these sets are simple box  sets, i.e $\bo{U}^{i}=\left
\{ \bo{u}^i \in \rset^{n_\bo{u}^i} | \bo{u}^i_{min} \leq \bo{u}^i
\leq \bo{u}^i_{max} \right \}$, then the complexity of computing
$\bo{\bar{v}}^i(\bo{u})$,  once $\nabla_i f(\bo{u})$ is available,
is $\mathcal{O}(n_\bo{u}^i)$. In turn, computing $\nabla_i
f(\bo{u})$ has, in the worst case, complexity $\mathcal{O}
(n_\bo{u}^i n_\bo{u})$ for quadratic dense functions. Thus,
Algorithm PCDM has usually a very low iteration cost per subsystem
compared to other existing methods, e.g. Jacobi type algorithm
presented in \cite{Stewart}, which usually require numerical
complexity at least $\mathcal{O}((n_\bo{u}^i)^3 + n_\bo{u}^i
n_\bo{u})$ per iteration for each subsystem $i$, provided that the
local quadratic problems are solved with an interior point solver.
Also, in the following two theorems we provide estimates for the
convergence rate of our algorithm, while for the algorithm in
\cite{Stewart} only asymptotic convergence is proved.

From \eqref{opcond}-\eqref{ec11}, convexity of $f$ and $\bo{u}_{k+1}
= \sum_i \frac{1}{M} \bar{\bo{u}}^i(\bo{u}_k)$ we see immediately
that method PCDM decreases strictly the objective function at
each iteration, provided that $\bo{u}_{k}\neq \bo{u}_*$, where $\bo{u}_*$ is the optimal solution
of \eqref{prob_princ}, i.e.:
\begin{align}
\label{decreasef} f(\bo{u}_{k+1}) < f(\bo{u}_{k}) \;\; \forall k
\geq 0, \; \bo{u}_{k} \neq \bo{u}_*.
\end{align}

Let $f^*$ be the optimal value in optimization problem
\eqref{prob_princ}. The following theorem derives  convergence rate
of Algorithm PCDM and employs standard techniques for proving  rate
of convergence of the gradient method \cite{Nes:12,Nes:04}:
\begin{theorem}
\label{thc} If function $f$ in optimization problem
\eqref{prob_princ} has a coordinate-wise Lipschitz continuous
gradient with constants $L_i$ as given in \eqref{nablai}, then
Algorithm PCDM has the following sublinear rate of convergence:
\begin{equation*}
f(\bo{u}_{k})\! - \!f^*\leq \frac{M}{M+k} \left(
\frac{1}{2}r_0^2+f(\bo{u}_0)\! - \!f^* \right ),
\end{equation*}
\end{theorem}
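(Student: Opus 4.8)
Fix a minimizer $\bo{u}_*$ of \eqref{prob_princ}, put $\Delta_k:=f(\bo{u}_k)-f^*$ and $r_k:=\left\|\bo{u}_k-\bo{u}_*\right\|_1$ (so that $r_0$ is the quantity in the statement, measured in the norm \eqref{ec12}), and gather the block updates into $\bar{\bo{v}}(\bo{u}_k):=\sum_{i=1}^M E^i\bar{\bo{v}}^i(\bo{u}_k)$, so that step~2 of Algorithm PCDM reads $\bo{u}_{k+1}=\frac1M\bar{\bo{v}}(\bo{u}_k)+\frac{M-1}{M}\bo{u}_k$. The plan is first to establish the single per-iteration inequality
\[
\Delta_{k+1}+\tfrac12 r_{k+1}^2\;\le\;\tfrac{M-1}{M}\,\Delta_k+\tfrac12 r_k^2 ,
\]
and then to convert it into the stated rate by a weighted induction that also uses the monotonicity $\Delta_{k+1}\le\Delta_k$ guaranteed by \eqref{decreasef}.

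To get the per-iteration inequality I would first upper-bound $f(\bo{u}_{k+1})$. Since $\bo{u}_{k+1}=\frac1M\sum_{i=1}^M\bar{\bo{u}}^i(\bo{u}_k)$, convexity of $f$ gives $f(\bo{u}_{k+1})\le\frac1M\sum_i f(\bar{\bo{u}}^i(\bo{u}_k))$, and applying the descent bound \eqref{ec7} to each $\bar{\bo{u}}^i(\bo{u}_k)=\bo{u}_k+E^i(\bar{\bo{v}}^i(\bo{u}_k)-\bo{u}_k^i)$ yields $f(\bo{u}_{k+1})\le f(\bo{u}_k)+\frac1M T_k$ with $T_k:=\sum_{i=1}^M\bigl[\langle\nabla_i f(\bo{u}_k),\bar{\bo{v}}^i(\bo{u}_k)-\bo{u}_k^i\rangle+\frac{L_i}{2}\left\|\bar{\bo{v}}^i(\bo{u}_k)-\bo{u}_k^i\right\|^2\bigr]$.

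The key step is to bound $T_k$ through the optimality conditions \eqref{opcond}. For each $i$ and any $\bo{v}^i\in\bo{U}^i$, combining \eqref{opcond} with the elementary identity $L_i\langle a,b\rangle-\frac{L_i}{2}\|a\|^2=\frac{L_i}{2}\|b\|^2-\frac{L_i}{2}\|a-b\|^2$ (for $a=\bar{\bo{v}}^i(\bo{u}_k)-\bo{u}_k^i$, $b=\bo{v}^i-\bo{u}_k^i$) gives
\[
\langle\nabla_i f(\bo{u}_k),\bar{\bo{v}}^i(\bo{u}_k)-\bo{u}_k^i\rangle+\tfrac{L_i}{2}\left\|\bar{\bo{v}}^i(\bo{u}_k)-\bo{u}_k^i\right\|^2\le\langle\nabla_i f(\bo{u}_k),\bo{v}^i-\bo{u}_k^i\rangle+\tfrac{L_i}{2}\left\|\bo{v}^i-\bo{u}_k^i\right\|^2-\tfrac{L_i}{2}\left\|\bar{\bo{v}}^i(\bo{u}_k)-\bo{v}^i\right\|^2 .
\]
Choosing $\bo{v}^i=\bo{u}_*^i$ and summing over $i$ (the $L_i$'s are already in place to assemble the $\left\|\cdot\right\|_1$-norm), then using convexity of $f$ as $\langle\nabla f(\bo{u}_k),\bo{u}_*-\bo{u}_k\rangle\le f^*-f(\bo{u}_k)$, one obtains $T_k\le f^*-f(\bo{u}_k)+\frac12 r_k^2-\frac12\left\|\bar{\bo{v}}(\bo{u}_k)-\bo{u}_*\right\|_1^2$. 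Separately, convexity of $\left\|\cdot\right\|_1^2$ applied to $\bo{u}_{k+1}-\bo{u}_*=\frac1M(\bar{\bo{v}}(\bo{u}_k)-\bo{u}_*)+\frac{M-1}{M}(\bo{u}_k-\bo{u}_*)$ gives $r_{k+1}^2\le\frac1M\left\|\bar{\bo{v}}(\bo{u}_k)-\bo{u}_*\right\|_1^2+\frac{M-1}{M}r_k^2$, i.e.\ $\left\|\bar{\bo{v}}(\bo{u}_k)-\bo{u}_*\right\|_1^2\ge M r_{k+1}^2-(M-1)r_k^2$. Substituting the latter into the bound on $T_k$, inserting into $f(\bo{u}_{k+1})\le f(\bo{u}_k)+\frac1M T_k$ and subtracting $f^*$ collapses everything to the displayed per-iteration inequality.

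To finish, I would iterate: using $\Delta_{k+1}\le\Delta_k$ from \eqref{decreasef}, the per-iteration inequality implies $\tfrac12 r_{k+1}^2+\tfrac{M+k+1}{M}\Delta_{k+1}\le\tfrac12 r_k^2+\tfrac{M+k}{M}\Delta_k$, so the quantity $\tfrac12 r_k^2+\tfrac{M+k}{M}\Delta_k$ is nonincreasing in $k$ and hence bounded above by its value $\tfrac12 r_0^2+\Delta_0$ at $k=0$; dropping the nonnegative term $\tfrac12 r_k^2$ leaves $\frac{M+k}{M}\Delta_k\le\frac12 r_0^2+f(\bo{u}_0)-f^*$, which is exactly the claim. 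The part I expect to be most delicate is massaging \eqref{opcond} into the three-point form above and then keeping all constants tight: handling the $\frac1M$ and $\frac{M-1}{M}$ factors carelessly (for instance bounding $\frac1M\le1$ too early, or invoking \eqref{ec11} directly instead of tracking $\bar{\bo{v}}(\bo{u}_k)$ and $r_k$) yields only a cruder $\mathcal{O}(M/k)$ estimate, whereas the sharp $\frac{M}{M+k}$ decay requires precisely the weight $\frac{M+k}{M}$ on $\Delta_k$, chosen so that the surplus factor $\frac{M-1}{M}$ is absorbed by the monotone decrease of $\Delta_k$.
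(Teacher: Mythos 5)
Your argument is correct and is essentially the paper's own proof: your per-iteration inequality $\Delta_{k+1}+\tfrac12 r_{k+1}^2\le\tfrac{M-1}{M}\Delta_k+\tfrac12 r_k^2$ is exactly \eqref{rez_cent} combined with the convexity bound $\langle\nabla f(\bo{u}_k),\bo{u}_*-\bo{u}_k\rangle\le f^*-f(\bo{u}_k)$, and your weighted telescoping with the factor $\tfrac{M+k}{M}$ is equivalent to the paper's summation of these inequalities followed by the descent property $f(\bo{u}_j)\ge f(\bo{u}_{k+1})$. The only cosmetic difference is that you obtain the per-iteration bound via the three-point form of \eqref{opcond} together with Jensen's inequality for $\left\|\cdot\right\|_1^2$, whereas the paper expands $r_{k+1}^2$ directly and discards the slack from $\tfrac1M-2\le-1$; both routes rest on the same ingredients evaluated at $\bo{v}^i=\bo{u}_*^i$.
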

where $r_0= \left \| \bo{u}_0 \!-\!\bo{u}_* \right \| _1$.
\begin{proof}
We introduce the following term:
\begin{equation*}
r_k^2= \left \| \bo{u}_k \!-\!\bo{u}_* \right \| _1^2=
\sum_{i=1}^{M} L_i \left < \bo{u}_k^i \! - \!\bo{u}_*^i,
\bo{u}_k^i\! - \!\bo{u}_*^i \right >,
\end{equation*}
where $\bo{u}_*$ is the optimal solution of \eqref{prob_princ} and
$\bo{u}^i_*=(E^i)^T\bo{u}_*$. Then, using similar derivations as in \cite{Nes:12}, we have:
\begin{align*}
r_{k+1}^2\!& \! =\! \sum_{i=1}^{M} L_i \left \| \frac{1}{M}
\bo{\bar{v}}^i(\bo{u}_k)+(1\! -
 \!\frac{1}{M})\bo{u}_k^i\! - \! \bo{u}_*^i \right \|^2 \nonumber \\
&\overset{(\ref{opcond})}{\leq}   r_k^2 + \sum_{i=1}^{M}
\frac{L_i}{M}(\frac{1}{M}\! - \!2) \left \|
\bo{\bar{v}}^i(\bo{u}_k)-\bo{u}^i_k\right \| ^2 + \frac{2}{M} \left < \nabla_i f(\bo{u}_k), \bo{u}_*^i\! - \!\bo{\bar{v}}^i(\bo{u}_k) \right > \nonumber \\
&\overset{\frac{1}{M} \leq 1}\leq r_k^2 -  \frac{2}{M}
\sum_{i=1}^{M} (\frac{L_i}{2} \left \|
\bo{\bar{v}}^i(\bo{u}_k)-\bo{u}^i_k\right \| ^2 +\nonumber \\
& \quad \quad \left < \nabla_i f(\bo{u}_k),
\bo{\bar{v}}^i(\bo{u}_k)-\bo{u}^i_k \right >   +  \left < \nabla_i
f(\bo{u}_k), \bo{u}_*^i\! - \!\bo{u}_k^i \right >). \nonumber
\end{align*}
By convexity of $f$ and \eqref{ec7} we obtain:
\begin{align}
\label{rez_cent} r_{k+1}^2 \leq r_k^2 -  &
2(f(\bo{u}_{k+1})-f(\bo{u}_k))+ \frac{2}{M} \left <  \nabla
f(\bo{u}_k), \bo{u}_*-\bo{u}_k \right
>
\end{align}
and adding up these inequalities we get:
\begin{align*}
\frac{1}{2} r_0^2\!+\!f(\bo{u}_0) \!-\! f^* & \!\geq\!  \frac{1}{2}
r_{k+1}^2 \!\!+\! f(\bo{u}_{k+1}) \!-\!f^* \!+\!
 \!\frac{1}{M} \!\! \sum_{j=0}^{k} \!(f(\bo{u}_j) \!-\! f^*) \\
& \geq f(\bo{u}_{k+1})\! - \!f^*+\frac{1}{M} \sum_{j=0}^{k}
(f(\bo{u}_j)\! - \!f^*).
\end{align*}
Taking into account that our algorithm is a descent algorithm, i.e.
$f(\bo{u}_j)\geq f(\bo{u}_{k+1})$ for all $j \leq k$ and by the
previous inequality the proof is complete. \qed
\end{proof}

Now, we derive linear convergence rate for Algorithm PCDM, provided
that  $f$ is additionally  strongly convex:
\begin{theorem}
\label{thc2} Under the assumptions of Theorem \ref{thc} and if we
further assume that $f$ is strongly convex with regards to $\left \|
\cdot \right \|_1$ with a  constant $\sigma_1$ as given in
\eqref{strong_conv}, then the following linear rate of convergence
is achieved for Algorithm PCDM:
\begin{align*}
f(\bo{u}_{k}) \!-\! f^*  \!\leq\! \left ( 1 \!-\!
\frac{2\sigma_1}{M(1+\sigma_1)} \right )^{k} \!\! \left( \frac{1}{2}
r_0^2 \!+\! f(\bo{u}_{0}) \!- \! f^*  \right).
\end{align*}
\end{theorem}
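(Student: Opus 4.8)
The plan is to re-use the key inequality \eqref{rez_cent} established in the proof of Theorem~\ref{thc}, namely
\begin{equation*}
r_{k+1}^2 \leq r_k^2 - 2\bigl(f(\bo{u}_{k+1})-f(\bo{u}_k)\bigr) + \frac{2}{M}\bigl\langle \nabla f(\bo{u}_k),\,\bo{u}_*-\bo{u}_k\bigr\rangle ,
\end{equation*}
and to turn the last term into a genuine contraction by invoking strong convexity. Applying \eqref{strong_conv} with $\bo{w}=\bo{u}_*$, $\bo{v}=\bo{u}_k$ gives $\langle \nabla f(\bo{u}_k),\bo{u}_*-\bo{u}_k\rangle \leq f^* - f(\bo{u}_k) - \tfrac{\sigma_1}{2} r_k^2$. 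Substituting this, writing $\Delta_k := f(\bo{u}_k)-f^*$ and rearranging, I expect to land at the one-step estimate
\begin{equation*}
\tfrac{1}{2} r_{k+1}^2 + \Delta_{k+1} \;\leq\; \tfrac{1}{2}\Bigl(1-\tfrac{\sigma_1}{M}\Bigr) r_k^2 + \Bigl(1-\tfrac{1}{M}\Bigr)\Delta_k ,
\end{equation*}
that is, a recursion for the Lyapunov quantity $V_k := \tfrac{1}{2} r_k^2 + \Delta_k$ that appears in the statement.

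The second ingredient is the reverse comparison $r_k^2 \leq \tfrac{2}{\sigma_1}\Delta_k$. It follows from \eqref{strong_conv} with $\bo{w}=\bo{u}_k$, $\bo{v}=\bo{u}_*$ together with the fact that, since $\bo{u}_*$ minimizes $f$ over the convex set $\bo{U}$ and $\bo{u}_k\in\bo{U}$, the first-order optimality condition yields $\langle\nabla f(\bo{u}_*),\bo{u}_k-\bo{u}_*\rangle\geq 0$, whence $\Delta_k \geq \tfrac{\sigma_1}{2} r_k^2$. I would then extract the contraction factor by balancing the right-hand side of the one-step estimate: keep just enough of the $r_k^2$-term so that its coefficient equals $\tfrac{\rho}{2}$, bound the remaining surplus of $r_k^2$ by $\tfrac{2}{\sigma_1}\Delta_k$, and choose $\rho$ as the smallest value for which the resulting total coefficient of $\Delta_k$ stays below $\rho$. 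A short computation pins this down to $\rho = 1-\tfrac{2\sigma_1}{M(1+\sigma_1)}$ (and one checks $\rho \leq 1-\tfrac{\sigma_1}{M}$, using $\sigma_1\leq 1$, so the surplus coefficient is nonnegative and the splitting is legitimate). Hence $V_{k+1}\leq \rho V_k$, and iterating together with $\Delta_k \leq V_k$ and $V_0 = \tfrac{1}{2} r_0^2 + f(\bo{u}_0)-f^*$ gives the claimed bound.

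The only genuinely delicate point is securing the sharp constant $\tfrac{2\sigma_1}{M(1+\sigma_1)}$: a naive application of the Lyapunov function $V_k$ alone would yield only the weaker rate $1-\tfrac{\sigma_1}{M}$, and it is precisely the auxiliary inequality $r_k^2 \leq \tfrac{2}{\sigma_1}\Delta_k$ together with the optimal weighting between the two terms that recovers the stated factor. Everything else is bookkeeping identical to the proof of Theorem~\ref{thc} — in particular the passage from \eqref{opcond}--\eqref{ec7} to \eqref{rez_cent} used only $\tfrac{1}{M}\leq 1$, so no additional restriction on $M$ enters here.
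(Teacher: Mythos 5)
Your proposal is correct and follows essentially the same route as the paper: the one-step estimate you derive is exactly the paper's inequality \eqref{rezult1}, your auxiliary bound $r_k^2 \leq \tfrac{2}{\sigma_1}\,(f(\bo{u}_k)-f^*)$ is the paper's $f(\bo{u}_k)-f^*+\tfrac{\sigma_1}{2}r_k^2 \geq \sigma_1 r_k^2$, and your ``optimal weighting'' between the two terms is precisely the paper's convex-combination splitting with $\gamma = \tfrac{2\sigma_1}{1+\sigma_1}$, yielding the same contraction factor $1-\tfrac{2\sigma_1}{M(1+\sigma_1)}$. No gaps; the argument is sound.
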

\begin{proof}
We take $\bo{w}= \bo{u}^*$ and $\bo{v}=\bo{u}_k$
in \eqref{strong_conv} and through \eqref{rez_cent} we get:
\begin{align}\label{rezult1}
\frac{1}{2} r_{k+1}^2 + f(\bo{u}_{k+1})-f^* \leq &\frac{1}{2} r_k^2
+f(\bo{u}_k)-f^* -\frac{1}{M} (f(\bo{u}_k)-f^* +\frac{\sigma_1}{2}
r_k^2).
\end{align}
From the strong convexity of $f$ in \eqref{strong_conv} we also get:
\begin{align*}
f(\bo{u}_k)-f^* + \frac{\sigma_1}{2} r_k^2 \geq \sigma_1 r_k^2.
\end{align*}
We now define $\gamma=\frac{2\sigma_1}{1+\sigma_1} \in [0,1]$ and using
the previous inequality  we obtain the following result:
\begin{align*}
 f(\bo{u}_k)-f^*+\frac{\sigma_1}{2} r_k^2 \geq
 &\gamma \left( f(\bo{u}_k)-f^*+\frac{\sigma_1}{2} r_k^2 \right) + (1-\gamma)\sigma_1 r_k^2.
\end{align*}
Using this inequality in \eqref{rezult1} we get:
\begin{align*}
\frac{1}{2} r_{k+1}^2 \!+\! f(\bo{u}_{k+1})-f^*  \!\leq\! \left ( 1-
\frac{\gamma}{M} \right ) \left ( \frac{1}{2} r_k^2 +f(\bo{u}_{k})
-f^*  \right ).
\end{align*}
Applying this inequality iteratively, we obtain the following for $k \geq 0$:
\begin{align*}
\frac{1}{2} r_{k}^2 \!+\! f(\bo{u}_{k}) \!-\! f^* \!\leq\! \left (1
\!-\! \frac{\gamma}{M} \right )^{k} \left ( \frac{1}{2} r_0^2
+f(\bo{u}_{0}) - f^*  \right ),
\end{align*}
and by replacing $\gamma=\frac{2\sigma_1}{1+\sigma_1} $ we complete the
proof. \qed
\end{proof}

The following properties follow immediately for our Algorithm PCDM.
\begin{lema}
\label{lemmap} For the optimization problem \eqref{prob_princ}, with
the assumptions of Theorem \ref{thc2}, we have the following statements:\\
 (i) Given any feasible initial guess $\bo{u}_0$, the iterates of the
Algorithm PCDM are feasible at each iteration, i.e. $\bo{u}_k^i \in
\bo{U}^{i}$ for all $k \geq 0$.\\
(ii) The  function $f$ is nonincreasing, i.e. $f( \bo{u}_{k+1} )
\leq f(\bo{u}_{k})$ according to \eqref{decreasef}. \\
(iii) The sub(linear) rate of convergence of Algorithm PCDM is
given in Theorem \ref{thc} (Theorem \ref{thc2}).
\end{lema}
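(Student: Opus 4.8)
The plan is to dispatch the three claims in order; all of them are quick consequences of facts already assembled in Section~\ref{secrcd}, so the ``proof'' is really a matter of bookkeeping rather than new ideas.

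For part (i) I would argue by induction on $k$. The base case holds because Algorithm PCDM is initialized with $\bo{u}_0^i \in \bo{U}^i$ for all $i$. For the inductive step, observe that $\bo{\bar{v}}^i(\bo{u}_k) \in \bo{U}^i$ directly from its definition, since it is the minimizer of the constrained coordinate subproblem taken over $\bo{U}^i$. The update rule then writes $\bo{u}_{k+1}^i = \frac{1}{M}\bo{\bar{v}}^i(\bo{u}_k) + \frac{M-1}{M}\bo{u}_k^i$ as a convex combination (weights $\frac{1}{M}$ and $\frac{M-1}{M}$, both in $[0,1]$) of $\bo{\bar{v}}^i(\bo{u}_k) \in \bo{U}^i$ and of $\bo{u}_k^i \in \bo{U}^i$, the latter by the induction hypothesis. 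Convexity of the set $\bo{U}^i$ gives $\bo{u}_{k+1}^i \in \bo{U}^i$, closing the induction.

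For part (ii) I would use the identity $\bo{u}_{k+1} = \sum_{i=1}^M \frac{1}{M}\bar{\bo{u}}^i(\bo{u}_k)$ noted just before \eqref{decreasef}, together with convexity of $f$, to bound $f(\bo{u}_{k+1}) \leq \frac{1}{M}\sum_{i=1}^M f(\bar{\bo{u}}^i(\bo{u}_k))$, and then estimate each term by the per-coordinate descent inequality \eqref{ec11}, i.e. $f(\bar{\bo{u}}^i(\bo{u}_k)) \leq f(\bo{u}_k) - \frac{L_i}{2}\left\| \bo{\bar{v}}^i(\bo{u}_k) - \bo{u}_k^i \right\|^2 \leq f(\bo{u}_k)$. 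Summing over $i$ gives $f(\bo{u}_{k+1}) \leq f(\bo{u}_k)$, which is exactly statement \eqref{decreasef}; moreover the inequality is strict unless $\bo{\bar{v}}^i(\bo{u}_k) = \bo{u}_k^i$ for every $i$, which (via the optimality conditions \eqref{opcond}) forces $\bo{u}_k = \bo{u}_*$. Part (iii) needs no work at all: it merely restates the sublinear rate of Theorem~\ref{thc} and, under the extra strong convexity hypothesis, the linear rate of Theorem~\ref{thc2}, both already proved above.

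There is no genuine obstacle in this lemma; the only points requiring a little care are making the convex-combination argument and the induction in (i) explicit, and invoking \eqref{ec11} with the correct sign in (ii) so that the monotone decrease comes out in the right direction.
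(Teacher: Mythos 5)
Your proposal is correct and matches the paper's (implicit) argument: the paper states that these properties ``follow immediately'' and establishes the descent property \eqref{decreasef} just before the lemma using precisely the ingredients you invoke --- the identity $\bo{u}_{k+1}=\sum_i \frac{1}{M}\bar{\bo{u}}^i(\bo{u}_k)$, convexity of $f$, and \eqref{opcond}--\eqref{ec11} --- while (i) is the standard convex-combination/induction observation and (iii) is a restatement of Theorems \ref{thc} and \ref{thc2}. You have simply made explicit what the paper leaves as immediate.
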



\section{Application of Algorithm PCDM to distributed suboptimal
MPC} \label{pcdmmpc} The Algorithm PCDM can be used to solve
distributively input constrained MPC problems for network systems after state elimination.
In this section we show that the MPC scheme obtained
by solving approximately the corresponding optimization problem with
Algorithm PCDM is stable and distributed.

\subsection{MPC for network systems with  terminal cost and  without end constraints}
\label{secpf}

In this paper we consider discrete-time network systems, which  are
usually modeled  by a graph  whose nodes represent subsystems and
whose arcs indicate dynamic couplings,  defined by the following
linear state  equations \cite{CamSch:11,DoaKev:11,Sca:09}:
\begin{align}
x^i_{t+1}= \sum _{j \in \neigh } \label{mod3}
 A^{ij} x_t^{j} +B^{ij} u_t^j, \qquad i=1, \cdots, M,
\end{align}
where $M$ denotes the number of interconnected subsystems,  $x_t^j
\in \mathbb{R}^{n_j}$ and $u_t ^j \in \mathbb{R}^{m_j}$ represent
the state and respectively the input of $j$th subsystem at time $t$,
$A^{ij} \in \mathbb{R}^{n_i \times n_j}$, $B^{ij} \in
\mathbb{R}^{n_i \times m_j}$ and
  $\neigh$ is the set of indices
which contains the index $i$ and that of its neighboring subsystems.
A particular case of  \eqref{mod3}, that is frequently found in
literature \cite{Nec:08,Stewart,Venkat}, has the following dynamics:
\begin{equation}
\label{simp1} x^i_{t+1}=A^{ii} x_t ^i+   \sum _{j \in \neigh
} B^{ij} u_t^j.
\end{equation}
For stability analysis, we also express the dynamics of the entire
system: $x_{t+1}=A x_t+ Bu_t$, where $n=\displaystyle \sum_{i=1}^M
n_i$, $m=\displaystyle \sum_{i=1}^M m_i$, $x_t \in \rset^n$,  $u_t
\in \rset^{m}$ and $A \in \rset^{n \times n}$, $B\in \rset^{n \times
m}$. For system  \eqref{mod3} or \eqref{simp1} we consider local
input constraints:
\begin{equation}\label{mod2}
u_t^i \in U^i \;\;\;  \; i=1,\cdots,M, \;\ t\geq 0,
\end{equation}
with $U^i \subseteq \mathbb{R}^{m_i}$ compact, convex sets with the
origin in their interior.  We also consider convex local stage and
terminal costs for each subsystem $i$: $\ell^i(x^i, u^i)$ and
$\ell_{\text{f}}^i(x^i) $. Let us denote the input trajectory for
subsystem $i$ and the overall input trajectory for the entire system
by:
\begin{align*}
\bo{u}^i&=[(u^i_0)^T \cdots (u^i_{N-1})^T]^T, \;
\bo{u}=[(\bo{u}^1)^T \cdots (\bo{u}^M)^T]^T.
\end{align*}
 We can now formulate the MPC problem for
system \eqref{mod3} over a prediction horizon of length $N$ and a
given initial state $x$ as \cite{RawMay:09}:
\begin{align}
V_N^*(x) =&  \min _{u_t^i \in U^i\; \forall i, t} V_N(x, \bo{u}) \quad
\left(:= \sum_{i=1}^M \sum_{t=0}^{N-1}
\ell^i(x_t^i,u_t^i) + \ell_{\text{f}}^i (x_N^i) \right) \label{mod5} \\
&\text{s.t:} \; x^i_{t+1}= \!\!\! \sum _{j \in
 \neigh } A^{ij} x_t^{j} + B^{ij} u_t^j, \;\;  x_0^i=x^i , \;  i= 1, \cdots, M,
\;\; t \geq 0. \nonumber
\end{align}
It is well-known that by eliminating the states using dynamics
\eqref{mod3}, the MPC problem \eqref{mod5} can be recast
\cite{RawMay:09} as a convex optimization problem of type
\eqref{prob_princ}, where $n_\bo{u}^i=Nm_i$, the function $f$ is
convex (recall that we assume the stage
 and final costs $\ell^i(\cdot)$ and $\ell^i_\text{f}(\cdot)$ to be convex), whilst the convex
sets $\bo{U}^{i}$ are the  Cartesian product of the convex sets
$U^i$ for $N$ times. Moreover, in the case  of dynamics
\eqref{simp1}, we can express the objective function of problem
\eqref{prob_princ} as a sum of local functions with sparse
structure:
\begin{equation}\label{simp55}
f(\mathbf{u}^1,\dots ,\mathbf{u}^M)=\sum_{i=1}^{M}
f^i(\mathbf{u}^j,j \in \neigh).
\end{equation}
Further, we denote  the approximate solution produced by Algorithm
PCDM for problem \eqref{mod5} after certain number of iterations
with $\bo{u}^{\text{CD}}$. We also consider that at each MPC step
the Algorithm PCDM is initialized (warm start) with the shifted
sequence of controllers obtained at the previous step and the
feedback controller $\kappa(\cdot)$ computed in Section
\ref{sec_stability} below. The suboptimal MPC scheme corresponding
to \eqref{mod5} would now be:
\begin{center}
\framebox[1.3\width]{
\parbox{9cm}{
\begin{center}
\textbf{ Suboptimal MPC scheme}
\end{center}
\textbf{Given initial state} $x$   \textbf{and initial} $ \tilde{\bo{u}}^{\text{CD}}$
 \textbf{repeat}:
\begin{enumerate}
\item{ \textbf{Recast MPC problem \eqref{mod5} as opt. problem \eqref{prob_princ} }}
\item{ \textbf{Solve \eqref{prob_princ} approximately with Alg. PCDM starting
from  $ \tilde{\bo{u}}^{\text{CD}}$   and
obtain~$\bo{u}^{\text{CD}}$ } }
\item{\textbf{Update $x$. Update $\tilde{\bo{u}}^{\text{CD}}$  using warm start.} }
\end{enumerate}
}}
\end{center}


\subsection{Distributed synthesis for a terminal cost}
\label{sec_stability} We assume that stability of the MPC scheme
\eqref{mod5} is enforced by adapting the terminal cost
$\ell_{\text{f}} (\cdot) = \sum_{i=1}^M \ell_{\text{f}}^i (\cdot)$
and the horizon length $N$ appropriately such that sufficient
stability criteria are fulfilled
\cite{HuLin:02,LimAla:03,PriNev:00}. Usually, stability of MPC with
quadratic stage cost  $\ell^i(x^i, u^i) =  \left \| x^i \right
\|^2_{Q^i}  +  \left \| u^i \right \|^2_{R^i} $, where the matrices
$Q^i \succeq 0$ and $R^i \succ 0$, and without terminal constraint
is enforced if the following criteria hold: there exists a
neighborhood of the origin $\Omega \subseteq \rset^n$, a stabilizing
feedback law $\kappa(\cdot)$ and a terminal cost $\ell_{\text{f}}
(\cdot)$ such that we have
\begin{equation}
\begin{cases}
 \ell_\text{f} (Ax+ B \kappa(x)) - \ell_\text{f}(x) +  \kappa(x)^T R
 \kappa(x) +
 x^TQx & \leq 0 \;\; \forall x \in \Omega \\
 \kappa(x) \in \bo{U}, \; Ax+B \kappa(x) \in \Omega,
\end{cases} \label{lyap1}
\end{equation}
where the matrices $Q$ and $R$ have a block diagonal structure and
are composed of the blocks $Q^i$ and $R^i$, respectively. As shown
in \cite{HuLin:02,LimAla:03,PriNev:00}, MPC schemes based on  the
condition \eqref{lyap1} are  usually less conservative than schemes
based on end point constraint. Keeping in line with the distributed
nature of our system, the control law $\kappa(\cdot)$ and the final
stage cost $\ell_\text{f}(\cdot)$ need to be computed locally. In
this section we develop a distributed synthesis procedure to
construct them locally. We choose the terminal cost for each
subsystem $i$ to be quadratic: $\ell_\text{f}^i (x_N^i)=\left \|
x_N^i\right \|_{P^i}^2$, where $P^{i} \succ 0$.  For a locally
computed $\kappa(\cdot)$, we employ distributed control laws:  $u^i
= F^i x^i$, i.e. $\kappa(\cdot)$ is taken linear with a
block-diagonal structure. Centralized LMI formulations of
\eqref{lyap1} for quadratic terminal costs are well-known in the
literature \cite{RawMay:09}. However, our goal is to  solve
\eqref{lyap1} distributively. To this purpose, we first need to
introduce vectors $x^{\neigh} \in \rset^{n_{\neigh}}$ and
$u^{\neigh} \in \rset^{m_{\neigh}}$ for subsystem $i$, where
$n_{\neigh}= \displaystyle \sum_{j \in \neigh} n_j$ and $m_{\neigh}=
\displaystyle \sum_{j \in \neigh} m_j$. These vectors are comprised
of the state and input vectors of
 subsystem $i$ and those of its neighbors: $x^{\neigh} = \begin{bmatrix} (x^j)^T, j \in \neigh
 \end{bmatrix}^T, \; u^{\neigh} = \begin{bmatrix} (u^j)^T, j \in \neigh  \end{bmatrix}^T$.

Since our synthesis procedure needs to be distributed and taking
into account that $\ell_{\text{f}} (\cdot) = \sum_{i=1}^M
\ell_{\text{f}}^i (\cdot)$, we impose the following distributed
structure to ensure \eqref{lyap1} (see also \cite{JokLaz:09} for a
similar approach where infinity-norm control Lyapunov functions are
synthesized in a decentralized fashion   by solving  linear programs
for each subsystem)  for $i=1, \cdots, M$:
\begin{align}
\ell_\text{f}^i ((x^i)^+) - & \ell_\text{f}^i ((x^i)) \!+\!  ( F^i
x^i)^T R^i F^i x^i\! +\! (x^i)^T Q^i x^i  \leq q^i(x^{\neigh}) \;
\forall x^{\neigh} \!\! \in \rset^{n_{\neigh}} \label{ineq_individ}
\end{align}
such that $q(x) = \sum_{i=1}^M   q^i(x^{\neigh}) \leq 0$. We assume
that  $q^i(x^{\neigh})$ also have a quadratic form, with
$q^i(x^{\neigh}) = \left \| x^{\neigh} \right \|_{W^{\neigh}}^2$,
where $W^{\neigh} \in \rset^{n_{\neigh} \times n_{\neigh}}$. Being a
sum of quadratic functions, $q(x)$ can itself be expressed as a
quadratic function, $q(x)=\left \| x \right \|^2_W$, where $W \in
\rset^{n \times n}$  is formed from the appropriate block components
of matrices $W^{\neigh}$. Note that we do not require that matrices
$W^{\neigh}$ be negative semidefinite. On the contrary, positive or
indefinite matrices allow local terminal costs to increase so long
as the global cost still decreases.  This approach reduces the
conservatism in deriving the matrices $P^i$ and $F^i$. For obtaining
$P^i$ and $F^i$, we introduce matrices $E^i_n \in \rset^{n_i \times
n}$, $E^i_m \in \rset^{m_i  \times m}$, $\Jvec_n \in
\rset^{n_{\neigh}  \times n}$, $\Jvec_m \in \rset^{m_{\neigh} \times
m}$ such that $x^i=E^i_n x, \; u^i=E^i_m u, \; x^{\neigh}=\Jvec_n x$
and $ u^{\neigh}=\Jvec_m u$.  We now define the matrices
$A^{\neigh}=E^i_n A (\Jvec_n)^T$, $\ B^{\neigh}= E^i_n B
(\Jvec_m)^T$ and $ F^{\neigh}=\Jvec_m F(\Jvec_n) ^T$, as to express
the dynamics \eqref{mod3} for subsystem $i$: $x_{t+1}^i=
(A^{\neigh}+B^{\neigh} F^{\neigh})x^{\neigh}_t$. Using these
notations we can now recast inequality \eqref{ineq_individ}~as:
\begin{align}
&(A^{\neigh}+B^{\neigh} F^{\neigh})^T P^i (A^{\neigh}+B^{\neigh} F^{\neigh}) \label{ineq_system} \\
&-\Jvec_n (E^i_n)^T (P^i+Q^i+(F^i)^T R^i F^i) E^i_n (\Jvec_n)^T
\preceq W^{\neigh}. \nonumber
\end{align}

The task of finding suitable $P^i$, $F^i$ and $W^{\neigh}$ matrices
is now reduced to the following optimization problem:
\begin{align}
& \underset{P^i,F^i,W^{\neigh},\delta} \min  \{ \delta: \;\;
\text{MI} \ \eqref{ineq_system}, \;  i=1,\cdots,M, \quad W
\preceq \delta I \}. \label{sdp_1}
\end{align}
It can be easily observed that if the optimal value $\delta^* \leq
0$, consequently $W \leq 0$ and \eqref{lyap1} holds. This
optimization problem, in its current  nonconvex form, cannot be
solved efficiently. However, it can be recast as a sparse SDP if we
can reformulate \eqref{ineq_system} as an LMI. We need now to make
the assumption that all the subsystems have the same dimension for
the states, i.e. $n_i = n_j$ for all $i, j$. Subsequently, we
introduce the well-known  linearizations: $P^i=(S^i)^{-1}, \;
F^i=Y^i G^{-1}$ and a series of matrices that will be of aid in
formulating the LMIs:
\begin{align*}
G^{\neigh}&= I_{\left | {\neigh} \right |} \otimes G, \;\;  G^{\neigh
 \setminus i}=[0 \;\; I_{\left | {\neigh} \right | -1} \otimes G ], \; S^{\neigh} = \text{diag}
 (S^i, \mu_i  I_{({n_{\neigh} -n_i})})  \\
Y^{i,j}&\!=\!F^j \! G, \; j \in \neigh \!\! \setminus \!i, \;\;
Y^{\neigh}\!\!\!=\!
\text{diag}(Y^i\!,\! Y^{i,j})\!=\!F^{\neigh}\!\! G^{\neigh}, \\
T^{\neigh}&\! =\!\begin{bmatrix}
A^{\neigh}G^{\neigh} \!+\! B^{\neigh} Y^{\neigh} \\
G^{\neigh \setminus i}
  \end{bmatrix}\!,\; T^i\!=\!\begin{bmatrix}
 (Q^i)^{\frac{1}{2}} G & \; \; 0  \\
(R^i)^{\frac{1}{2}}Y^i & \; \; 0
 \end{bmatrix},
\end{align*}
where the $0$ blocks are of appropriate dimensions\footnote{By $I_n$
we denote the identity matrix of size $n \times n$,  by $\otimes$ we
denote the standard Kronecker product and by $\left | {\neigh}
\right |$ the cardinality of the set $\neigh$.}.

\begin{lema}
\label{lemma1} If the following SDP:
\begin{equation}\label{sdp_2}
\underset{G,S^i,Y^i,Y^{i,j},\tilde{W},\mu^i,\delta}{\min} \; \delta  \\
\end{equation}
\begin{align}
&\text{ s.t:} \quad
\begin{bmatrix}
G^{\neigh}\!+\!(G^{\neigh})^T \!-\!S^{\neigh}\!+ \! \tilde{W}^{\neigh}& \! *  \! &  \! *\! \\
\!T^{\neigh}\! \!&\! S^{\neigh} & * \\
\!T^i\! & \!0\! & \!I\!
\end{bmatrix} \succ 0 \label{lmisystem}\\
&Y^{i,j}=Y^{j}   \;\; \forall j\in \neigh, \;\  i=1,\cdots,M, \quad
\tilde{W} \preceq \delta I, \nonumber
\end{align}
has an optimal value $\delta^* \leq 0$,  then \eqref{lyap1}
holds\footnote{By $*$ we denote the transpose of the symmetric block
of the matrix.}.
\end{lema}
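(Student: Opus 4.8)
The plan is to read \eqref{lmisystem} as the standard convexification of the bilinear matrix inequality \eqref{ineq_system} under the change of variables $P^i=(S^i)^{-1}$, $F^i=Y^iG^{-1}$, and then to run that reduction backwards. So I would take any feasible point $(G,S^i,Y^i,Y^{i,j},\tilde W,\mu^i,\delta^*)$ of \eqref{sdp_2}--\eqref{lmisystem} with $\delta^*\le 0$, first argue that $G$ is invertible — the $(1,1)$ block of \eqref{lmisystem} forces $G^{\neigh}+(G^{\neigh})^T-S^{\neigh}+\tilde W^{\neigh}\succ 0$, and with $S^{\neigh}\succ 0$ and $\tilde W\preceq\delta^* I\preceq 0$ a vector in $\ker G^{\neigh}$ would make this quadratic form nonpositive — and then take the Schur complement of \eqref{lmisystem} with respect to its positive definite lower-right block $\mathrm{diag}(S^{\neigh},I)$, obtaining
\[
G^{\neigh}+(G^{\neigh})^T-S^{\neigh}+\tilde W^{\neigh}-(T^{\neigh})^T(S^{\neigh})^{-1}T^{\neigh}-(T^i)^T T^i\succ 0 .
\]

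The crucial step is the elementary bound $(G^{\neigh})^T(S^{\neigh})^{-1}G^{\neigh}\succeq G^{\neigh}+(G^{\neigh})^T-S^{\neigh}$ (valid for $S^{\neigh}\succ 0$, since $(G^{\neigh}-S^{\neigh})^T(S^{\neigh})^{-1}(G^{\neigh}-S^{\neigh})\succeq 0$): substituting it for the affine-in-$G$ part of the displayed inequality, pre- and post-multiplying by $(G^{\neigh})^{-T}$ and $(G^{\neigh})^{-1}$, and then expanding $T^{\neigh}$, $T^i$, $S^{\neigh}=\mathrm{diag}(S^i,\mu^i I)$ and $G^{\neigh}=I_{\cardneigh}\otimes G$ with $P^i=(S^i)^{-1}$, $F^i=Y^iG^{-1}$, should reproduce \eqref{ineq_system} exactly, with $W^{\neigh}:=(G^{\neigh})^{-T}\tilde W^{\neigh}(G^{\neigh})^{-1}$. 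Two bookkeeping facts are what make this close up: the coupling constraints $Y^{i,j}=Y^j$ ensure that the feedback $F^{\neigh}$ reconstructed from $Y^{\neigh}(G^{\neigh})^{-1}$ is the genuine block-diagonal law restricted to the neighborhood (so that $(A^{\neigh}+B^{\neigh}F^{\neigh})x^{\neigh}$ really is the $i$-th block of $Ax+B\kappa(x)$); and the slack block $\mu^i I$ inside $S^{\neigh}$ produces, via the $G^{\neigh\setminus i}$ rows of $T^{\neigh}$, a term in $(S^{\neigh})^{-1}$ that cancels against an equal term in $(A^{\neigh}+B^{\neigh}F^{\neigh})^T(S^{\neigh})^{-1}(\cdot)$, so that only $P^i$ — not the neighbors' terminal matrices — survives, exactly as in \eqref{ineq_system}. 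At this point \eqref{ineq_system}, hence \eqref{ineq_individ} with $q^i(x^{\neigh})=\left\|x^{\neigh}\right\|^2_{W^{\neigh}}$, holds for the recovered $P^i$, $F^i$.

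Finally I would assemble \eqref{lyap1}. Because $G^{\neigh}=I_{\cardneigh}\otimes G$ conjugates every neighborhood block by the same $G$, the assembly $W$ of the local matrices $W^{\neigh}$ coincides with $(I_M\otimes G)^{-T}\tilde W(I_M\otimes G)^{-1}$, so $\tilde W\preceq\delta^* I\preceq 0$ gives $W\preceq 0$, i.e. $q(x)=\left\|x\right\|^2_W\le 0$; summing \eqref{ineq_individ} over $i=1,\dots,M$ and using $\ell_{\ff}=\sum_i\ell_{\ff}^i$, $x^TQx=\sum_i (x^i)^TQ^ix^i$, $\kappa(x)^TR\kappa(x)=\sum_i (F^ix^i)^TR^iF^ix^i$ and $\kappa(x)=Fx$ yields the decrease inequality in \eqref{lyap1} for every $x\in\rset^n$. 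Picking $\Omega=\{x:\ell_{\ff}(x)\le c\}$ with $c$ small enough that $\Omega\subseteq\{x:Fx\in\bo{U}\}$ — possible since $\bo{U}$ contains the origin in its interior — forward invariance of $\Omega$ under $x\mapsto Ax+B\kappa(x)$ follows at once from the decrease, giving the remaining conditions in \eqref{lyap1}. I expect the main obstacle to be the middle paragraph: carrying the Kronecker/selection structure cleanly through the Schur complement and the congruence, verifying that $\mu^i$ and the equalities $Y^{i,j}=Y^j$ do exactly the cancellation/consistency job claimed, and pinning down the orientation of the $W^{\neigh}\leftrightarrow\tilde W^{\neigh}$ congruence and of the assembly $W\leftrightarrow\{W^{\neigh}\}$ so that the sign in \eqref{lyap1} comes out correctly.
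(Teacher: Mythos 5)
Your proposal follows essentially the same route as the paper's proof: take the Schur complement of \eqref{lmisystem} with respect to $\mathrm{diag}(S^{\neigh},I)$, invoke the bound $G^{\neigh}+(G^{\neigh})^T-S^{\neigh}\preceq (G^{\neigh})^T(S^{\neigh})^{-1}G^{\neigh}$ coming from $(S^{\neigh}-G^{\neigh})^T(S^{\neigh})^{-1}(S^{\neigh}-G^{\neigh})\succeq 0$, and perform the congruence by $(G^{\neigh})^{-1}$ to recover \eqref{ineq_system} with $W^{\neigh}=(G^{\neigh})^{-T}\tilde W^{\neigh}(G^{\neigh})^{-1}$. The extra details you supply (invertibility of $G$, and the final assembly of \eqref{lyap1} via $W\preceq 0$ and the choice of $\Omega$) are correct elaborations of steps the paper leaves to the surrounding discussion in Section 3.2, not a different argument.
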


\begin{proof}
From \eqref{lmisystem} we observe that $S^{\neigh} \succ 0$, so that
$(S^{\neigh}-G^{\neigh})^T (S^{\neigh})^{-1} (S^{\neigh} \\ -
G^{\neigh}) \succeq 0$, which in turn implies
\begin{align}\label{g_rel}
G^{\neigh}+(G^{\neigh})^T-S^{\neigh}\preceq
(G^{\neigh})^T(S^{\neigh})^{-1}G^{\neigh}.
\end{align}
If we apply the Schur complement to \eqref{lmisystem}, we obtain:
\begin{align*}
 0\preceq & G^{\neigh}+(G^{\neigh})^T-S^{\neigh}+\tilde{W}^{\neigh} - (T^{\neigh})^T (S^{\neigh})^{-1} T^{\neigh} - (T^i)^T T^i
\end{align*}
and by \eqref{g_rel} we get $(G^{\neigh})^{-T} \left [
(T^{\neigh})^T (S^{\neigh})^{-1} T^{\neigh} + (T^i)^T T^i \right ]
(G^{\neigh})^{-1} -(S^{\neigh})^{-1} \\  \preceq (G^{\neigh})^{-T}
\tilde{W}^{\neigh} (G^{\neigh})^{-1}$, which is equivalent to
\eqref{ineq_system} if  we consider  $W^{\neigh} = (G^{\neigh})^{-T}
\tilde{W}^{\neigh}  (G^{\neigh})^{-1}$. \qed
\end{proof}
There exist in  literature  many optimization algorithms (see e.g.
\cite{Nay:07}) for solving  distributively sparse SDP problems in
the form \eqref{sdp_2}.

\subsection{Stability of the MPC scheme}
\label{mpc_stab}

 We can
consider the cost function of the MPC problem
$V_N(x,\bo{u}^{\text{CD}})$ as a Lyapunov function, using the
standard theory for suboptimal control (see e.g.
\cite{LimAla:03,PriNev:00,RawMay:09,ScoMay:99,Stewart} for similar
approaches). We also consider that at each MPC step the  Algorithm
PCDM is initialized (warm start) with the shifted sequence of
controllers obtained at the previous step and the feedback
controller $\kappa(\cdot)$ computed in Section \ref{sec_stability}
such that \eqref{lyap1} is satisfied  and we denote it by
$(\tilde{\bo{u}}^{\text{CD}})^+$. Assume also that $\kappa(\cdot)$,
$\ell_f(\cdot)$ and $\alpha>0$ are chosen such that, together with
the following set
\begin{equation*}
\Omega=\left \{ x \in \rset^n: \ell_f(x)\leq \alpha \right \},
\end{equation*}
 satisfies \eqref{lyap1}. Then, using Theorem 3 from
\cite{LimAla:03} we have that our MPC controller stabilizes
asymptotically the system for all initial states $x \in X_N$, where
\begin{equation*}
 X_N=\left \{ x \in \rset^n: V_N^*(x)\leq Nd+\alpha \right \},
\end{equation*}
such that $V_N(x,\bo{u}^{CD})\leq Nd+\alpha$, where $d>0$ is  a
parameter for which we have $\ell(x,\bo{u})\geq d$  for all $ x
\notin \Omega$. Clearly, this MPC scheme is locally stable with a
region of attraction $X_N$.

\subsection{Distributed implementation of the MPC scheme based on Algorithm PCDM}
\label{mpc_distrib}
In this section we discuss some technical aspects for the
distributed  implementation of the  MPC scheme derived above when
using Algorithm PCDM to solve the control problem \eqref{mod5}.
Usually, in the linear MPC framework, the local stage and  final
cost are taken of the following quadratic form:
\begin{align*}
\ell^i(x^i, u^i) = \left \| x^i \right \|^2_{Q^i}  + \left \| u^i
\right \|^2_{R^i}, \quad \ell^i_{\text{f}} (x^i) = \left \| x^i
\right \|^2_{P^i},
\end{align*}
where the matrices $Q^i, P^i \in \rset^{n_{i} \times n_{i}}$ are
positive semidefinite, whilst matrices  $R_i \in \rset^{m_{i} \times
m_{i}} $  are positive definite. We also assume that the local
constraints sets $U^i$ are polyhedral. In this particular case, the
objective function in \eqref{mod5}, after eliminating the dynamics,
is quadratically strongly convex, having the form \cite{RawMay:09}:
 \[ f(\bo{u}) =0.5 \ \bo{u}^T \bo{Q} \bo{u} + (\bo{W} x + \bo{w})^T  \bo{u}, \] where
$\bo{Q}$ is positive definite  due to the assumption that all $R^i$
are positive definite.  Usually, for the dynamics \eqref{mod3} the
corresponding matrices $\bo{Q}$ and $\bo{W}$ obtained after
eliminating the states  are dense and despite the fact that
Algorithm PCDM can perform parallel computations (i.e. each
subsystem needs to solve small local problems) we need all to all
communication between subsystems. However, for the dynamics
\eqref{simp1} the corresponding matrices $\bo{Q}$ and $\bo{W}$ are
sparse and in this case in our Algorithm PCDM we can perform
distributed computations (i.e. the subsystems solve small local
problems in parallel and they need to communicate only with their
neighborhood subsystems as detailed below). Indeed, if the dynamics
of the system are given by \eqref{simp1}, then \[ x^i_{t+1} =
(A^{ii})^t x^i_t + \sum_{l=1}^t \sum _{j \in \mathcal {N}^{i}}
(A^{ii})^{l-1} B^{ij} u^j_{t-l} \] and thus the matrices $\bo{Q}$
and $\bo{W}$ have a sparse structure (see also \cite{CamSch:11}).
Let us define the \textit{neighborhood subsystems} of a certain
subsystem $i$ as $ \hat {\mathcal N}^i = {\mathcal N}^i \cup \{l: \;
l \in {\mathcal N}^j, j \in \bar {\mathcal N}^i\}$, where $\bar
{\mathcal N}^i = \{ j: \; i \in {\mathcal N}^j \}$, then the matrix
$\bo{Q}$ has all the $(i,j)$ block matrices $\bo{Q}^{ij} = 0$ for
all $j \notin  \hat {\mathcal N}^i$ and the matrix $\bo{W}$ has all
the block matrices $\bo{W}^{ij} = 0$ for all $j \notin  \bar
{\mathcal N}^i$, for any given subsystem $i$. As a result, we can
express the objective function of problem \eqref{prob_princ} as a
sum of local functions with sparse structure:
\begin{equation}\label{simp5}
f(\mathbf{u}^1,\dots ,\mathbf{u}^M)=\sum_{i=1}^{M}
f^i(\mathbf{u}^j,j \in \neigh).
\end{equation}
Thus, the $i$th block components of $\nabla f$
 can be computed using only local information:
\begin{align}
 \nabla_i f(\bo{u}) &= \sum_{j \in \hat {\mathcal N}^{i}} \bo{Q}^{ij}
\bo{u}^j +  \sum_{j \in \bar {\mathcal N}^{i}}  \bo{W}^{ij} x^j +
\bo{w}^i. \label{grad_L}
\end{align}

Note that in  Algorithm PCDM the only parameters that we need to
compute are the  Lipschitz constants $L_i$. However, in the MPC
problem, $L_i$ does not depend on the initial state $x$ and can be
computed locally by each subsystem as: $L_i =
\lambda_{\max}(\bo{Q}^{ii})$. From the previous discussion  it
follows immediately that the iterations of Algorithm PCDM can be
performed in parallel using  distributed computations (see
\eqref{grad_L}).

Further, our Algorithm PCDM has a simpler implementation of the
iterates than the algorithm from \cite{Stewart}: in Algorithm PCDM
the main step consists of computing  local  projections on the sets
$\bo{U}^i$ (in the context of  MPC  usually these sets are simple
and the projections can be computed in closed form); while in the
algorithm from \cite{Stewart} this step is replaced with solving
local dense QP problems with the feasible set given by $\bo{U}^i$
(even in the context of MPC  this local QP problems cannot be solved
in closed form and an additional QP solver needs to be used).
Finally,  the number of iterations for finding an approximate
solution can be easily predicted in our Algorithm (see Theorems
\ref{thc} and \ref{thc2}), while in the algorithm from
\cite{Stewart} the authors prove only asymptotic converge.


\section{Numerical Results}
\label{numresults} \noindent Since our Algorithm PCDM has
similarities with the algorithm from \cite{Stewart}, in this section
we compare these two algorithms  on controlling a laboratory setup with DMPC
(4 tank process) and on  MPC problems for random network systems of varying
dimension.

\subsection{Quadruple tank process}\label{quadtank}

\begin{figure}[h]
\begin{center}
 \includegraphics[scale=0.8]{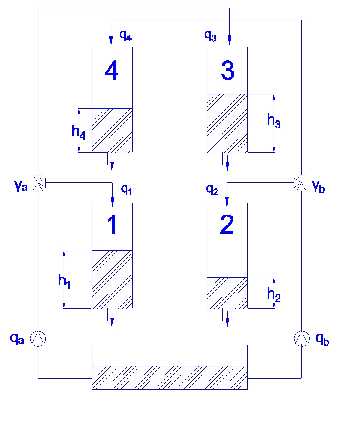}
\caption{Quadruple tank process diagram.} \label{procdiag}
\end{center}
\end{figure}
To demonstrate the applicability of our Algorithm PCDM, we apply
this newly developed method for solving the optimization problems
arising from the MPC problem for a process consisting of four
interconnected water tanks, see Fig. \ref{procdiag} for the process
diagram, whose objective is to control the level of water in each of
the four tanks. For this plant, there are two types of system inputs
that can be considered: the pump flows, when the ratios of the three
way valves are considered fixed, or the ratios of the three way
valves, whilst having fixed flows from the pumps. In this paper, we
consider the latter option, with the valve ratios denoted by
$\gamma_a$ and $\gamma_b$, such that tanks 1 and 3 have inflows
$\gamma_a q_a$ and $(1-\gamma_a) q_a$, while tanks 2 and 4 have
inflows $\gamma_b q_b$ and $(1-\gamma_b)q_b$. The simplified
continuous nonlinear model of the plant is well known
\cite{AlvLim:11}. We use the following notation:  $h_i$ are the
levels and $a_i$ are the  discharge constants of tank $i$, $S$ is
the cross section of the tanks, $\gamma_a$, $\gamma_b$  are the
three-way valve ratios, both in $[0,1]$, while $q_a$ and $q_b$ are
the pump flows.


\begin{table}[h]
 \centering
\footnotesize
\setlength{\tabcolsep}{3pt}
\begin{tabular}{|c|c|c|c|c|c|c|c|c|c|c|c|c|} \hline
 Param & S & $a_1$ & $a_2$ & $a_3$ & $a_4$ & $h_1^0$ &  $h_2^0$ &  $h_3^0$ &  $h_4^0$ &  $q_{a/b}^{max}$ &  $\gamma_a^0$ & $\gamma_b^0$ \\
 \hline Value & 0.02 & $5.8e{-5}$ &  $6.2e{-5}$ & $2e{-5}$ & $3.6e{-5}$ &  $0.19$ & $0.13$ & $0.23$ & $0.09$ &  $0.39$ & 0.58 & 0.54   \\
\hline Unit & $m^2$ & $m^2$ & $m^2$ & $m^2$ & $m^2$ & $m$ & $m$ & $m$ & $m$  & $\frac{m^3}{h}$  & &  \\ \hline
\end{tabular}
\caption{Quadruple tank process parameters.}
\label{tab:4tank parameters}
\end{table}
\normalsize

The discharge constants $a_i$, with $i=1,\dots,4$ and the other
parameters of the model are determined experimentally from our
laboratory setup (see Table \ref{tab:4tank parameters}). We can
obtain a linear continuous state-space model by linearizing the
nonlinear model at an operating point given by $h_i^0$,
$\gamma_a^0$, $\gamma_b^0$, and the maximum inflows from the pumps,
with the deviation variables $x^i=h_i-h_i^0$,
$u^1=\gamma_a-\gamma_a^0$, $u^2=\gamma_b-\gamma_b^0$:
\begin{align*}
 \frac{dx}{dt}=
\begin{bmatrix} -\frac{1}{\tau_1} & 0 &  0 & \frac{1}{\tau_4} \\
 0 & -\frac{1}{\tau_2} &  \frac{1}{\tau_3} & 0  \\ 0 & 0 & -\frac{1}{\tau_3} & 0 \\
0 & 0 & 0 & -\frac{1}{\tau_4} \end{bmatrix} x + \begin{bmatrix} \frac{q_a^{max}}{S} & 0 \\ 0 & \frac{q_b^{max}}{S} \\  \frac{-q_a^{max}}{S} & 0
\\ 0 & \frac{-q_b^{max}}{S}  \end{bmatrix}u,
\end{align*}
where $\tau_i=\frac{S}{a_i} \sqrt{\frac{2h_i^0}{g}}$, $i=1,\dots,4$, is the time constant for tank $i$.

Using zero-order hold method with a sampling time of $5$ seconds  we
obtain the discrete time model of type \eqref{simp1}, with the
partition $x^1 \leftarrow \left[ x^1~ x^4\right]^T$ and $x^2
\leftarrow \left[ x^2~ x^3\right]^T$. For the input constraints of
the MPC scheme we consider the practical constraints  of the ratios
of the three way valves for our plant, i.e $u^i \in [0.15,\
0.8]-\gamma^i_0$, where $\gamma^i_0$ is the linearization input. Due
to the fact that our plant has overflow sensors fitted to the tanks
and an emergency shutoff program, we do not introduce constraints
for the states. For the stage cost we have taken the weighting
matrices to be $Q^i=I_{n_i}$ and $R^i=0.01I_{m_i}$.


\subsection{Implementation of the MPC scheme using MPI}
\label{mpi} In this section we underline the benefits of Algorithm
PCDM when it is implemented in an appropriate  fashion for  the
quadruple tank MPC scheme.  We implemented for comparison, Algorithm
PCDM and that of \cite{Stewart}. Both algorithms were implemented in
C programming language, with parallelization ensured via MPI and
linear algebra operations done with CLAPACK. Algorithm
\cite{Stewart} requires solving, at each step, $2$ QP problems in
parallel, problems which cannot be solved in closed form. For
solving these QP problems, we use the \textit{qpip} routine of the
QPC toolbox \cite{QPC}. The algorithms were implemented on a PC,
with 2 Intel Xeon E5310 CPUs at 1.60 GHz and 4Gb of RAM. For the MPC
problem in this subsection we control the plant such that the levels
and inputs will reach those of the steady state linearization values
$h^0$ and $\gamma^0$.

\begin{figure}[h!]
\centering
\includegraphics[scale=0.28]{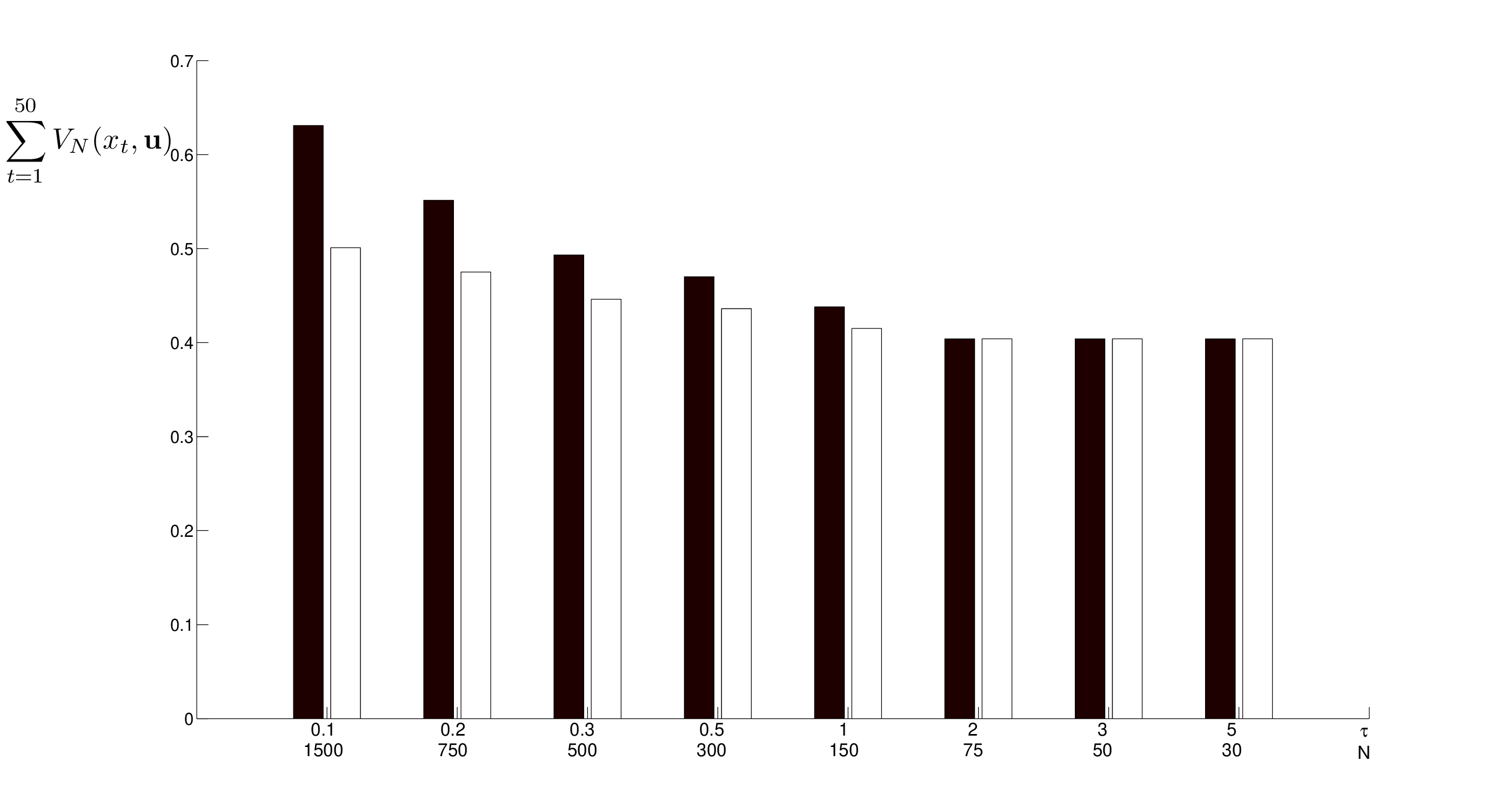}
\caption{Total costs for $50$ MPC steps with PCDM (white) and
\cite{Stewart} (black).} \label{mpc_costs}
\end{figure}

Figure \ref{mpc_costs} outlines a comparison of the two algorithms
for solving this quadruple tank MPC problem, considering a
prediction time of $150$ seconds, for different prediction horizons
$N$ and sampling time $\tau$, such that $\tau N=150$ seconds. The
bar values represent the total sum $\displaystyle \sum_{t=1}^{50}
V_N(x_t,\bo{u})$ for $50$ MPC steps, where $\bo{u}$ is calculated
either with PCDM or with the algorithm from \cite{Stewart}. For the
same $50$ simulation steps, we outline in Table \ref{mpc_iterations}
a comparison of the average number of iterations achieved by both
algorithms and the performance loss, i.e. a percentile difference
between the suboptimal cost achieved in Figure \ref{mpc_costs}
($\displaystyle \sum_{t=1}^{50} V_N(x_t,\bo{u})$) and the optimal
costs that were precalculated with Matlab's quadprog ($\displaystyle
\sum_{t=1}^{50} V_N^*(x_t)$), both for the time $\tau$ and
prediction horizon $N$. Note that our total cost is usually better
than that of \cite{Stewart} when the available time is short
($\tau<2$) and for $\tau \geq 2$ both algorithms solve the
corresponding optimization problem exactly. Also note that, due to
its low complexity iteration, our algorithm performs more than ten
times the amount of iterations than the algorithm from
\cite{Stewart}.
\begin{center}
\begin{table}[h!]
\begin{center}
\small \setlength{\tabcolsep}{2pt}
 \begin{tabular}{|c|c|c|c|} \hline
\multicolumn{2}{|c|}{} & PCDM & \cite{Stewart} \\ \hline
$\tau$ & N & Iter / Perf. Loss (\%) & Iter / Perf. Loss (\%) \\
 \hline 0.1 & 1500 & 7 / 23.9 & 1 / 60.18\\
\hline 0.2 & 750 & 30 / 17.59  & 2 / 36.48 \\
 \hline 0.3 & 500 & 240 / 10.42  & 5 / 22.1 \\
\hline 0.5 & 300 & 1803 / 7.94 & 22 / 16.36 \\
 \hline 1 & 150 & 12244 / 2.74  & 258 / 8.44 \\
 \hline 2 & 75 & 67470 / 0 & 2495 / 0 \\
 \hline 3 & 50 & 153850 / 0 & 8663 / 0 \\
\hline 5 & 30 & 382810 / 0 & 38110 / 0  \\       \hline
\end{tabular}
\caption{Number of iterations and performance loss, for different
times $\tau$.} \label{mpc_iterations}
\end{center}
\end{table}
\end{center}
\normalsize


\subsection{Implementation of the MPC scheme using  Siemens S7-1200 PLC}

Due to the limitations, in both hardware and programming language,
of the S7-1200 PLC, a proper implementation of any distributed
optimization algorithm, in the sense of distributed computations and
passing information between processes running on different cores,
cannot be undertaken on it.
 However, to illustrate the fact that our PCDM algorithm
is suitable for control devices with limited computational power and memory, we implemented
it in a centralized manner for an MPC scheme in order to control the quadruple tank plant.
We note that S7-1200 PLC is considered an entry-level PLC, with $50$ KB of
main memory, $2$ MB of load memory (mass storage) and $2$ KB of
backup memory. There are two main function blocks for the algorithm
itself, one that updates $q(x) = \bo{W}x + \bo{w}$ in the quadratic
objective function $f$ given the current levels of the four tanks
and one in which Algorithm PCDM is implemented for solving problem
\eqref{prob_princ}. Both blocks contain Structured Control Language
 which corresponds to  IEC 1131.3 standard. The remaining
function blocks are used for converting the I/O for the plant to
corresponding metric values. The elements of the problem which
occupy the most memory is the $\bo{Q} \in \rset^{2N \times 2N}$
matrix of the objective function $f$ and matrix $\bo{W} \in
\rset^{2N \times 4}$  for updating $q(x)$. Both matrices are
precomputed offline using Matlab and then stored in the work memory
using Data Blocks. The components of the problem which require
updating are the input trajectory vectors $\bo{u}^i$ and the vector
$q(x)$ of the objective function $f(\bo{u})$ which is dependent  of
the current state of the plant and of the current set point. The
evolution of the tank levels and input ratios of the plant are
recorded in Matlab on the plant's PC workstation, via an OPC server
and Ethernet connection. In accordance with the imposed sample time
of $5$ seconds, the cycle time of the S7-1200 PLC is also limited to this interval.

\begin{table}
\centering
 \begin{tabular}{|l|c|c|c|}
\hline
Cycle Time & \multicolumn{3}{|c|}{5 \text{s}} \\
\hline
  Prediction Horizon $N$ & 10 & 20 & 30 \\
\hline
Maximum Number of Iter. & 104  & 39 & 15 \\ \hline
Used Memory (\%) & 59 & 72 & 88 \\
 \hline
 \end{tabular}
\caption{Available number of iterations and memory usage of Alg. PCDM}
\label{iterations}
\end{table}

Due to this cycle time, the limited size of the
S7-1200's work memory and its processing speed,  the number of
iterations of the Algorithm PCDM that can be computed are also
limited. In Table \ref{iterations} the number of iterations
available per prediction horizon,  included in the $5$ seconds cycle
time, and the memory requirements for these prediction horizons are
presented.  Although the numbers of computed iterations seem small,
we have found in practice that the suboptimal MPC scheme still
stabilizes the quadruple tank process and ensures set point
tracking.
\begin{figure}[h!]
\centering
\begin{subfigure}{\textwidth}
\includegraphics[width=\textwidth]{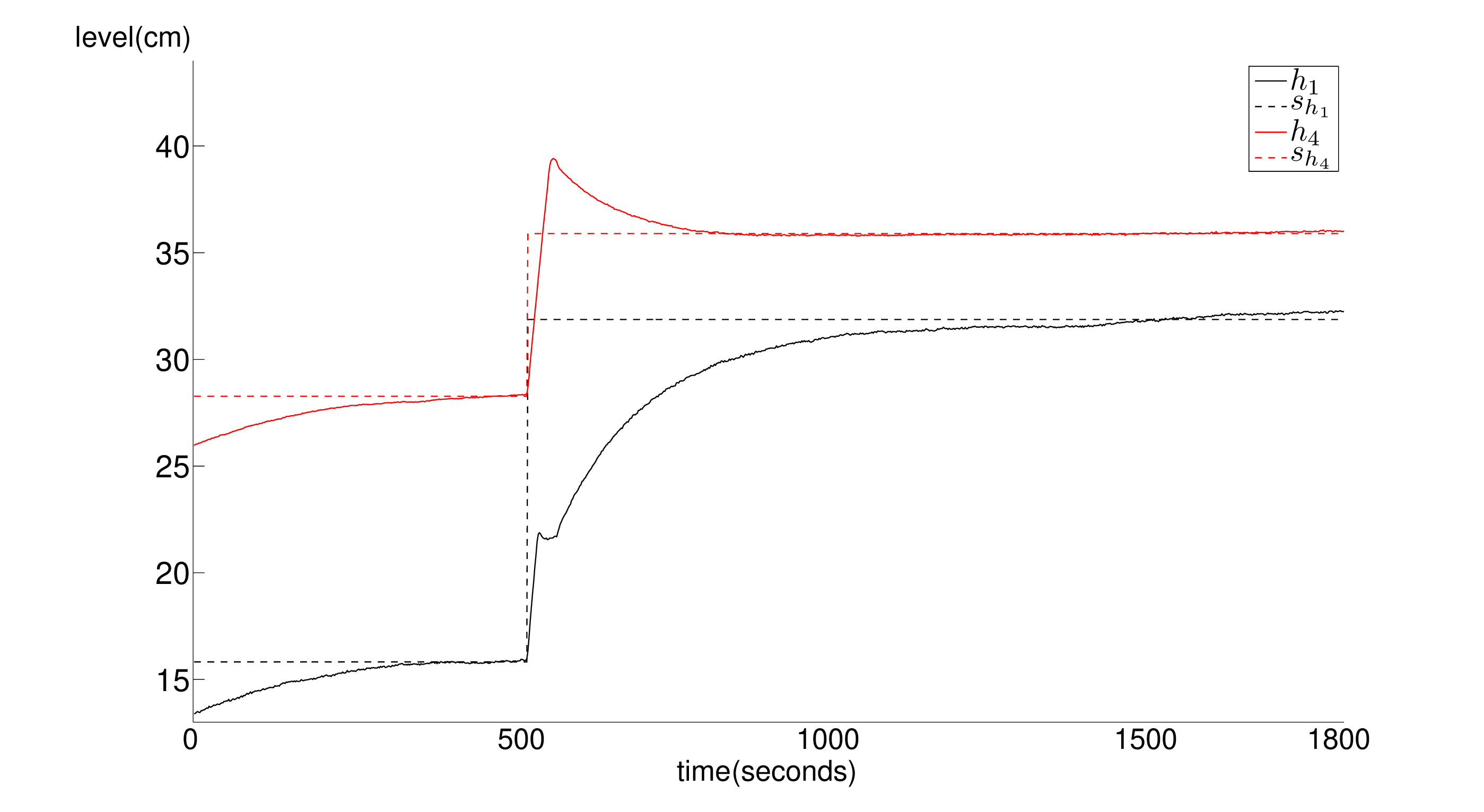}
\end{subfigure}
\begin{subfigure}{\textwidth}
\includegraphics[width=\textwidth]{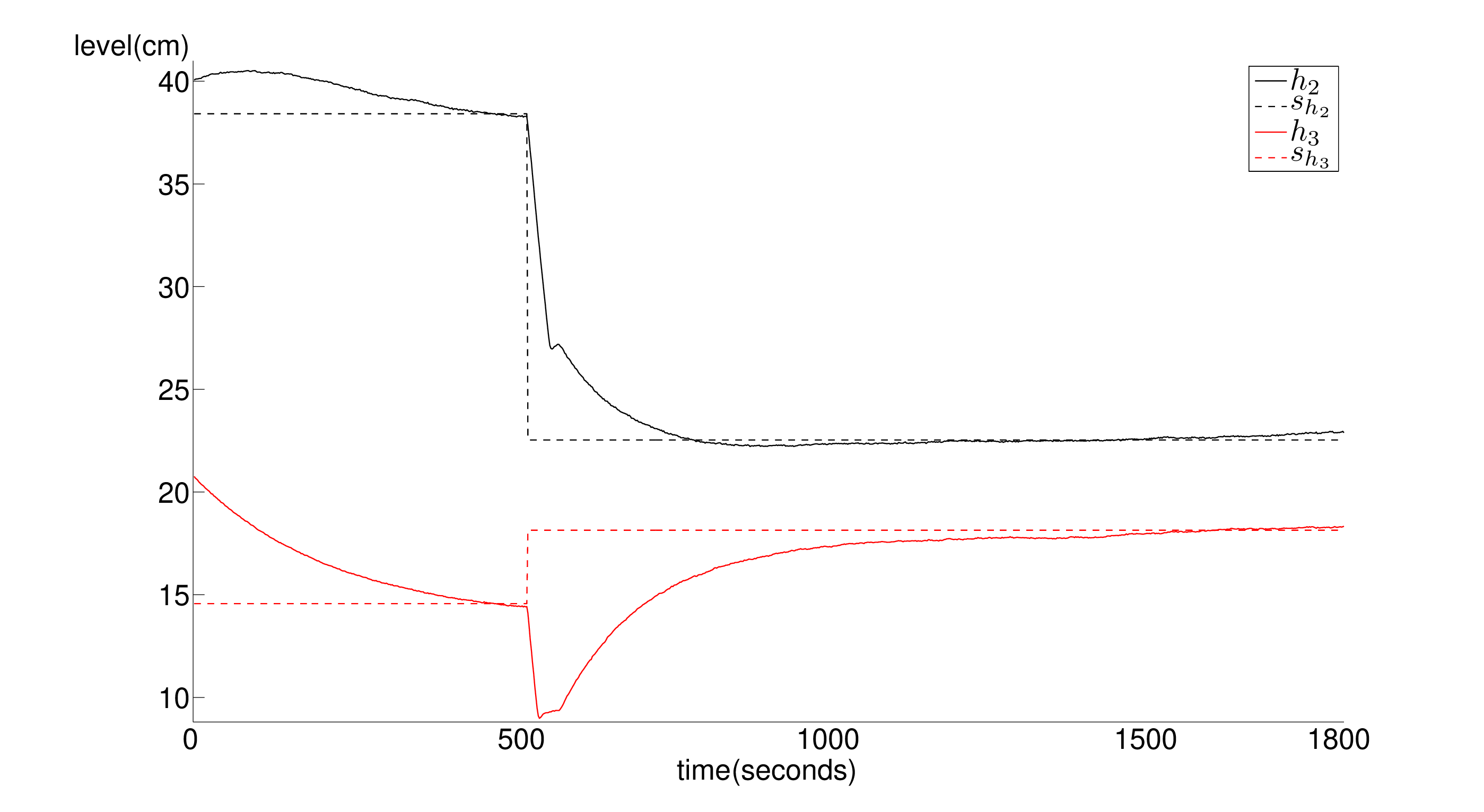}
\vspace{-1.3cm}
\end{subfigure}
\caption{Evolution of tank levels 1-4 (top), 2-3 (bottom), with continuous lines, against their respective set points, with dashed lines.} \label{4tank_results}
\end{figure}
The results of the control process are presented in Fig.
\ref{4tank_results}  for a prediction horizon $N=20$: the continuous
lines represent the evolution of water levels in each of the four
tanks, while the dashed lines are their respective set points. We
choose two set points. We first let the plant get near its first
setpoint, after which we choose a new set point which is an
equilibrium point for the plant. As it can be observed from the
figure, the MPC scheme still steers the process to the respective
set points.



\subsection{Implementation of MPC scheme for random network systems}
We now wish to outline a comparison of results between algorithm
PCDM and that of \cite{Stewart} when solving   QP problems arising
from  MPC for random network systems. Both algorithms were
implemented in the same manner as described in Section \ref{mpi}. We
considered random network systems with dynamics  \eqref{mod3}
generated as follows: the entries of system matrices $A^{ij}$ and
$B^{ij}$ are taken  from a normal distribution with zero mean and
unit variance. Matrices $A^{ij}$ are then scaled, so that they
become neutrally stable.  Matrices $Q^i \succeq 0$ and $R^i \succ 0$
are random. The input variables are constrained to lie in box sets
whose boundaries are generated randomly. The terminal cost matrices
$P^i$ are taken to be the solution of the SDP problem given in Lemma
\ref{lemma1}. For each subsystem the number  of inputs is taken $m_i
= 5$ or $m_i=10$. We let the prediction horizon  range between $N=6$
to $N=120$. The subsystems are arranged in a ring, i.e.
$\neigh=\{i-1, i, i+1\}$. We first considered $M=8$ subsystems,
matching the number of cores on our PC. Parallel implementation was
 also carried out  for $M=16$ subsystems, with each core of the PC running
two processes.  The resulting random QP problems have $p= M N m_i $
variables. The stopping criterion for each algorithm is
$f(\bo{u}_k)-f^* \leq 0.001$, with $f^*$ being precomputed for each problem using Matlab's quadprog. For each prediction horizon, $10$
simulations were run, starting from different random initial states.

\begin{table}[h]
\centering \small
\begin{tabular}{|c|c|c|c|c|c|c|c|} \hline
 \multicolumn{2}{|c|}{}  & \multicolumn{2}{|c|}{PCDM} & \multicolumn{2}{|c|}{\cite{Stewart}} & PCDM  & Quadprog \\
  \multicolumn{2}{|c|}{}  &     \multicolumn{2}{|c|}{}   &  \multicolumn{2}{|c|}{}      & centralized     &          \\ \hline
 M & p & CPU (s) & Iter & CPU (s) & Iter & CPU (s) & CPU (s)  \\ \hline
8 &  480 & \! 0.47 \! & 1396 & 1.904 & 682 & 0.663 & 1.08\\
  & 960 & 2.21 & 2839 & 21.52 & 1475 & 9.15 & 3.57 \\
  & 3200 & 256.4 & 8671 & 911.2 & 4197 & 265.3 & 39.8 \\
  & 4800 &   857.2   & 12750 &   7864.4 &  6182    &  1114       & 139.9 \\
 & 9600 & 2223.1  & 16950   &   *    &  *         & 3125   &  307.5   \\ \hline
16 & 480 & 4.36 & 2600 & 4.66 & 1615 & 0.99 & 0.97 \\
  & 960 & 15.02 & 4792 & 25.18 & 2798 & 14.17 & 3.21 \\
  & 3200 & 377.6 & 13966 & 612.8 & 8462 & 423.1 & 41.3 \\
  & 4800 & 1524.7 & 23539 & 3061.7 & 14241 & 2161.1 & 134.03 \\
  & 9600 & 3415.1  & 29057   &  *    &  *         & 4773   &  308.4  \\\hline
\end{tabular}
\caption{CPU time in seconds and nr. of iterations for alg. PCDM and
\cite{Stewart}. } \label{numerical_table}
\end{table}

\normalsize

Table \ref{numerical_table} presents the average CPU time in seconds for the
execution of each algorithm. It illustrates that Algorithm PCDM,
with its design for distributed computations and simple iterations,
usually performs better than that in \cite{Stewart}, where the
assumption is that for each iteration, a QP problem of size $ p/M$
needs to be solved.  The entries with $*$ denote that the algorithm
would have taken over $5$ hours to complete. Also note that our
implementation of the algorithm from \cite{Stewart}, for problems of
larger dimensions, i.e starting with $p=3200$,  takes less time for
it to complete if the problem is divided between $M=16$ subsystems
than $M=8$. This is due to the fact that the solver \textit{qpip}
takes much more time to solve problems of size $600$ in the case of
$p=4800$ and $M=8$ than problems of size $300$ for $p=4800$ and
$M=16$. Also, the transmission delays between subsystems are
negligible in comparison with these \textit{qpip} times.  We have
also implemented Algorithm PCDM in a centralized manner, i.e. without using MPI
and, as can be seen from the table, we gain speedups of computation
when the algorithm is parallelized. Algorithm  PCDM is outperformed
by Matlab's quadprog, but do note that quadprog is not designed for
distributed implementation and there are no transmission delays
between processes.


\section{Conclusions}
In this paper we have proposed  a parallel optimization algorithm
for solving  smooth convex  problems with separable constraints that
may arise e.g in MPC  for general linear systems comprised of
interconnected subsystems. The new optimization algorithm is based
on the block coordinate descent framework but with  very simple
iteration complexity and using  local information. We have shown
that for strongly convex objective functions it has linear
convergence rate. An MPC scheme based on this optimization algorithm
was derived, for which every subsystem in the network can compute
feasible and stabilizing control inputs using distributed
computations. An analysis for obtaining local terminal costs  from a
distributed viewpoint was made which guarantees stability of the
closed-loop interconnected system. Preliminary numerical tests show
that this algorithm is suitable for MPC applications, especially
those with hardware that has low computational power.


\end{document}